\theoremstyle{definition}
\newtheorem{definition}{Definition}[section]
\newtheorem{rmk}[definition]{Remark}
\theoremstyle{plain}
\newtheorem{theorem}[definition]{Theorem}
\newtheorem{prop}[definition]{Proposition}
\newtheorem{lem}[definition]{Lemma}
\newcommand{\mb}{\mathbb}
\newcommand{\mc}{\mathcal}
\newcommand{\bs}{\boldsymbol}
\newcommand\reallywidehat[1]{%
\savestack{\tmpbox}{\stretchto{%
  \scaleto{%
    \scalerel*[\widthof{\ensuremath{#1}}]{\kern-.6pt\bigwedge\kern-.6pt}%
    {\rule[-\textheight/2]{1ex}{\textheight}}
  }{\textheight}%
}{0.5ex}}%
\stackon[1pt]{#1}{\tmpbox}%
}
\begin{document}

\title[A Remark on the Set of Exactly Approximable Vectors]{A Remark on the Set of Exactly Approximable Vectors in the Simultaneous Case}

\thanks{The author was supported by SNSF grant 200021--182089.}

\author{Reynold Fregoli}

\address{Department of Mathematics\\ 
University of Z\"urich\\ 
Winterthurerstrasse 190, 8057\\ 
Z\"urich}
\email{reynoldfregoli@gmail.com}

\subjclass{11J13, 11J83}

\begin{abstract}We compute the Hausdorff dimension of the set of $\psi$-exactly approximable vectors, in the simultaneous case, in dimension strictly larger than $2$ and for approximating functions $\psi$ with order at infinity less than or equal to $-2$. Our method relies on the analogous result in dimension $1$, proved by Yann Bugeaud and Carlos Moreira, and a version of Jarn\'ik's Theorem on fibres.
\end{abstract}

\maketitle

\section{Introduction and Main Result}

Let $\psi:\mb N\to \mb (0,1]$ be a non-increasing function. The set of $\psi$-well-approximable numbers on the interval $[0,1]$ is defined as
$$W(1,\psi):=\left\{\alpha\in[0,1]:\left|\alpha-\frac{p}{q}\right|<\psi(q)\mbox{ for infinitely many }p,q\in\mb N\right\}.$$
This set is central to the theory of Diophantine approximation and, throughout the last century, multiple authors have contributed to the endeavour of establishing its metrical properties. In 1924, Khintchine \cite{Khi24} proved that its Lebesgue measure can only attain the values $0$ or $1$, depending on the convergence of the series $\sum_{q\in\mb N}q\cdot\psi(q)$. A few years later, Jarn\'ik \cite{Jar29} and, independently, Besicovitch \cite{Bes34} showed that the Hausdorff dimension of the set $W(1,\psi)$, in the special case $\psi(q)=q^{-\nu}$, with $\nu>2$, is equal to $2/\nu$. Finally, in 1931, Jarn\'ik \cite{Jar31} computed the $f$-Hausdorff measure of $W(1,\psi)$, under mild assumptions on the functions $\psi$ and $f$, thus producing a complete measure-theoretic description of this set. We refer the reader to \cite{BDV06} for a more precise formulation of these statements and additional references.

In \cite{Jar31}, Jarn\'ik introduced the following subset of $W(1,\psi)$:
$$E(1,\psi):=W(1,\psi)\setminus\bigcup_{0<c<1}W(1,c\psi).$$
The numbers belonging to $E(1,\psi)$ are known as $\psi$-\emph{exactly}-approximable, as the upper bound on their rational approximations cannot be improved. Relying on the theory of continued fractions, Jarn\'ik could show that the set $E(1,\psi)$ is uncountable. However, the true size of this set was determined only quite recently by Yann Bugeaud \cite[Theorem 1]{Bug03}, and, in greater generality, by Yann Bugeaud and Carlos Moreira \cite[Theorem 4]{BM11}. Both works also rely on the theory of continued fractions. Their results can be summarised as follows.
\begin{theorem}[Bugeaud-Moreira]
\label{prop:Bugeaud}
Assume that the product $q^{2}\cdot\psi(q)$ tends to $0$ as $q$ approaches infinity. Then, one has that
$$\textup{dim} E(1,\psi)=\textup{dim} W(1,\psi)=\frac{2}{\lambda},$$
where $\lambda$ is the lower order at infinity\footnote{The lower order at infinity of a function $g:\mb N\to (0,+\infty)$ is defined as
$$\liminf_{x\to \infty}\frac{\log g(x)}{\log x}.$$ The upper order at infinity of the function $g$ is defined analogously but with a limit superior in place of the limit inferior.} of the function $1/\psi$ and $\dim$ denotes the Hausdorff dimension.
\end{theorem}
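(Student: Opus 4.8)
The plan is to prove the chain of (in)equalities $\dim E(1,\psi)\le\dim W(1,\psi)\le 2/\lambda\le\dim E(1,\psi)$; the first inequality is immediate from $E(1,\psi)\subseteq W(1,\psi)$. The crucial preliminary reduction, and the only place where the full strength of the hypothesis $q^2\psi(q)\to 0$ is used, is this: since $\psi(q)<1/(2q^2)$ for all large $q$, Legendre's criterion forces every rational $p/q$ with $|\alpha-p/q|<\psi(q)$ and $q$ large to be a convergent $p_n/q_n$ of the continued fraction expansion $\alpha=[0;a_1,a_2,\dots]$. Hence, up to the finitely many solutions supported on bounded denominators, membership in $W(1,\psi)$ (and in each $W(1,c\psi)$) is governed by the inequality $|\alpha-p_n/q_n|<\psi(q_n)$ along convergents, equivalently by the growth of the partial quotients, via $|\alpha-p_n/q_n|\asymp 1/(q_nq_{n+1})$ and $q_{n+1}=a_{n+1}q_n+q_{n-1}$.

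For the upper bound $\dim W(1,\psi)\le 2/\lambda$ I would use the trivial cover $W(1,\psi)\subseteq\bigcup_{q\ge Q}\bigcup_{0\le p\le q}B(p/q,\psi(q))$, valid for every $Q$. Given $s>2/\lambda$, choose $\epsilon>0$ with $s>2/(\lambda-\epsilon)$; by definition of the lower order, $\psi(q)\le q^{-(\lambda-\epsilon)}$ for all $q\ge Q_\epsilon$, so the $s$-dimensional Hausdorff content satisfies $\mathcal{H}^s_\infty(W(1,\psi))\le\sum_{q\ge Q}(q+1)(2\psi(q))^s\ll\sum_{q\ge Q}q^{1-s(\lambda-\epsilon)}$, whose exponent is $<-1$; letting $Q\to\infty$ gives $\mathcal{H}^s(W(1,\psi))=0$, and since $s>2/\lambda$ was arbitrary we conclude $\dim W(1,\psi)\le 2/\lambda$.

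The substance of the theorem is the lower bound $\dim E(1,\psi)\ge 2/\lambda$, which calls for an explicit Cantor-type construction lying inside $E(1,\psi)$. By definition of the lower order there is a sequence $Q_j\to\infty$ along which $\psi(Q_j)\ge Q_j^{-\lambda-o(1)}$; these are the only scales at which $\psi$ is competitive, and a $\limsup$ set only "sees" its worst scales, so the dimension should come out to $2/\lambda$. I would build $\alpha=[0;a_1,a_2,\dots]$ so that on long blocks the partial quotients range freely over $\{1,\dots,M\}$ — producing a Cantor set whose dimension tends to $1$ as $M\to\infty$ and along which $|\alpha-p_n/q_n|\asymp q_n^{-2}\gg\psi(q_n)$, so these convergents contribute nothing — while at a very sparse sequence of special indices $n_k$, chosen so that $q_{n_k}$ sits just below a good scale $Q_{j_k}$ (so that $\psi(q_{n_k})\ge q_{n_k}^{-\lambda-o(1)}$, using that $\psi$ is non-increasing and consecutive denominators differ by a factor at most $M+1$), one takes $a_{n_k+1}$ of size $\approx 1/(q_{n_k}^2\psi(q_{n_k}))\approx q_{n_k}^{\lambda-2}$ and fine-tunes it, together with the continuation, so that $|\alpha-p_{n_k}/q_{n_k}|$ lands in the shrinking window $[(1-1/k)\psi(q_{n_k}),\psi(q_{n_k}))$ (possible because the tuning granularity $q_{n_k}/q_{n_k+1}\to 0$). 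This forces $\alpha\in W(1,\psi)$, and for every $c<1$ only finitely many convergents satisfy $|\alpha-p_n/q_n|<c\psi(q_n)$ — at special indices because the error eventually exceeds $(1-1/k)\psi>c\psi$, at ordinary indices because the error exceeds $\psi$ outright — so $\alpha\in E(1,\psi)$.

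It then remains to compute the Hausdorff dimension of the Cantor set $K\subseteq E(1,\psi)$ so obtained, and this is the main obstacle. One equips $K$ with the natural measure (uniform over the free partial quotients on each block, deterministic at the special steps) and applies the mass distribution principle: the dimension "cost" of inserting the large partial quotient $a_{n_k+1}\approx q_{n_k}^{\lambda-2}$ is, precisely because $Q_{j_k}$ is a lower-order scale, the cost producing local exponent $2/\lambda$, whereas between special steps the measure is essentially $1$-dimensional; as the special steps are sparse, the lower local dimension of the measure is $\ge 2/(\lambda+\epsilon)$ at $\mu$-a.e.\ point once $M$ is large and $\epsilon$ small. (Equivalently, one may invoke the known formula $\dim\{\alpha:q_{n+1}\ge q_n^{\tau}\text{ for infinitely many }n\}=2/(1+\tau)$ with $\tau\approx\lambda-1$, suitably adapted to the two-sided control needed here.) Letting $M\to\infty$ and $\epsilon\to 0$ yields $\dim E(1,\psi)\ge 2/\lambda$. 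The two delicate points are the bookkeeping that keeps every special error inside the shrinking windows $[(1-1/k)\psi,\psi)$ — which is what upgrades well-approximability to \emph{exact} approximability — and the verification that the sparse huge partial quotients do not depress the dimension of $K$ below $2/\lambda$.
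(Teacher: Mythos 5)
This theorem is not proved in the paper at all --- it is quoted from Bugeaud \cite{Bug03} and Bugeaud--Moreira \cite{BM11} --- so your proposal has to stand on its own. The Legendre reduction to convergents, the covering argument for $\dim W(1,\psi)\le 2/\lambda$, and the exactness bookkeeping via the windows $[(1-1/k)\psi(q_{n_k}),\psi(q_{n_k}))$ are fine in outline. The genuine gap is in the only hard step, the lower bound $\dim E(1,\psi)\ge 2/\lambda$: the Cantor set and measure you describe cannot certify it. If at each special index the huge partial quotient $a_{n_k+1}\approx q_{n_k}^{\lambda-2}$ is a single prescribed, fine-tuned value and the measure is, as you write, ``deterministic at the special steps'', then at the scale $r\approx |I_{n_k+1}|\approx q_{n_k}^{-2(\lambda-1)}$ the ball $B(x,r)$ already carries all of $\mu(I_{n_k})\approx q_{n_k}^{-2s}$ (no mass splitting has occurred at the special step, $s$ being the exponent of the restricted-digit blocks, close to $1$), so the lower local dimension dips to about $s/(\lambda-1)$, which is strictly smaller than $2/\lambda$ whenever $\lambda>2$. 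Sparseness of the special indices does not rescue this, since the liminf defining local dimension sees those scales; in fact the set $K$ itself, covered by its level-$(n_k+1)$ cylinders, satisfies $\dim K\le s/(\lambda-1)<2/\lambda$, so no measure supported on it can do better. Your assertion that the deterministic insertion ``costs'' local exponent $2/\lambda$ is precisely where the argument fails.

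The missing idea is that at each special step the mass must be spread over \emph{all} admissible values of $a_{n_k+1}$: the window still leaves roughly $q_{n_k}^{\lambda-2}/k$ integer choices, each compatible with an essentially free continuation (since the error varies by a relative amount $O(1/a_{n_k+1})\ll 1/k$ as the tail varies), and one must then redo the ball estimates at the intermediate scales, in particular at the bottleneck scale $r\approx\psi(q_{n_k})/k$ where the selected subcylinders all cluster next to $p_{n_k}/q_{n_k}$; it is this two-scale computation that yields the exponent $2/\lambda$, and it is the technical heart of \cite{Bug03} and \cite{BM11}. The parenthetical fallback of invoking $\dim\{\alpha:\ q_{n+1}\ge q_n^{\tau}\ \text{infinitely often}\}=2/(1+\tau)$ cannot substitute for it: that formula concerns a strictly larger set, and a lower bound for a superset says nothing about the exact-approximation subset. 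As written, the proposal establishes the upper bound and a plausible scheme for exactness, but not the dimension lower bound, which is the substance of the theorem.
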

The reader may also refer to \cite{FW22} for recent results on the Fourier dimension of the set $E(1,\psi)$.

The problem of studying $\psi$-approximable numbers $\alpha\in\mb R$ may naturally be generalised to vectors $\bs\alpha\in\mb R^n$ with $n\geq 2$. In this setting, two types of approximation have been widely studied: namely, the simultaneous and the dual type. The set of $\psi$-simultaneously-well-approximable vectors is defined as     
$$W(n,\psi):=\left\{\bs\alpha\in[0,1]^n: \max_{i=1}^{n}\left|\alpha_i-\frac{p_i}{q}\right|<\psi(q)\mbox{ for infinitely many }q\in\mb N,\ \bs p\in \mb Z^n\right\},$$
while its dual counterpart, where the vector $\bs\alpha$ is interpreted as a linear form, is defined as
\begin{multline}
W^{*}(n,\psi):=\Bigg\{\bs\alpha\in[0,1]^n: \left|\alpha_1 q_1+\dotsb \alpha_n q_n-p\right|<\psi\left(\max_{i=1}^{n}|q_i|\right) \\
\mbox{ for infinitely many }\bs q\in\mb Z^n\setminus\{\bs 0\},p\in \mb Z\Bigg\}.\nonumber
\end{multline}
A generalisation of Khintchine's measure-theoretic result, known as the Khintchine-Groshev Theorem, is well-known to hold in both settings \cite{BV10}. Jarn\'ik's dimensional result for the set $W(1,\psi)$ has also been generalised to both the simultaneous and the dual cases (see \cite{BD86} and references therein). Conversely, the set of $\psi$-exactly approximable numbers has so far eluded any such generalisation. A first contribution towards this was made in \cite{BDV01}, where Beresnevich, Dickinson, and Velani study metric properties of the set $W(n,\psi)\setminus W(n,\varphi)$ (and its dual) for two given approximating functions $\psi$ and $\varphi$. Their techniques, however, rest on the assumption that the ratio $\psi/\varphi$ tends to infinity \cite[Section 2.5]{BDV01} and hence, do not apply to the setting of exact approximation in the sense of Jarn\'ik. In a more general context, the same problem was studied by Bandi, Ghosh, and Nandi. In \cite{BGN23}, these authors consider the set of $\psi$-exactly approximable points in a general metric space, with respect to a discrete "well-distributed" subset, and prove related dimensional estimates. Nonetheless, the set of rational points in $\mb R^n$ is not well-distributed in the sense of \cite{BGN23}, leaving the problem of generalising Bugeaud's and Moreira's result to dimension $n\geq 2$ wide open.

Let
$$E(n,\psi):=W(n,\psi)\setminus\bigcup_{0<c<1}W(n,c\psi).$$
The purpose of this short note is to establish the Hausdorff dimension of the set $E(n,\psi)$ for $n\geq 3$ and under some assumptions on the decay of the function $\psi$. Our main result reads as follows.

\begin{theorem}
\label{prop:mainres}
Let $\psi:\mb N\to \mb (0,1]$ be as in Theorem \ref{prop:Bugeaud}, and assume that upper and lower order at infinity of the function $1/\psi$ coincide and are equal to $\lambda\geq 2$. Then, for $n\geq 3$ one has that
$$\dim E(n,\psi)=\dim W(n,\psi)=\frac{n+1}{\lambda},$$
where $\dim$ denotes the Hausdorff dimension.
\end{theorem}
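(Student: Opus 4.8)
We split the statement into the two inequalities $\dim E(n,\psi)\le (n+1)/\lambda$ and $\dim E(n,\psi)\ge (n+1)/\lambda$. The first is soft: since $n\ge 3$ we have $\lambda\ge 2>(n+1)/n$, so the simultaneous Jarn\'ik--Besicovitch theorem — in the form valid for general approximating functions, see \cite{BD86} — gives $\dim W(n,\psi)=(n+1)/\lambda$, and $E(n,\psi)\subseteq W(n,\psi)$ by definition. The whole content of the theorem is therefore the lower bound, and this is where Theorem \ref{prop:Bugeaud} and a fibred argument enter.

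The first step towards the lower bound is the elementary inclusion
\[
E(n,\psi)\ \supseteq\ W(n,\psi)\cap\bigl(E(1,\psi)\times[0,1]^{n-1}\bigr).
\]
Indeed, if $\alpha_1\in E(1,\psi)$ then, for every $c<1$, the inequality $\|q\alpha_1\|<c\,q\,\psi(q)$ has only finitely many solutions $q\in\mb N$, so a fortiori $(\alpha_1,\dots,\alpha_n)\notin W(n,c\psi)$; hence every point of $W(n,\psi)$ whose first coordinate lies in $E(1,\psi)$ already lies in $E(n,\psi)$. The set on the right is naturally \emph{fibred} over $E(1,\psi)$: setting $Q_a:=\{q\in\mb N:\|qa\|<q\,\psi(q)\}$ — an infinite set, contained (by the best approximation property, since $q^{2}\psi(q)\to 0$) in the set of denominators of convergents of $a$ — the fibre over $a\in E(1,\psi)$ equals
\[
F_a:=\bigl\{\bs\beta\in[0,1]^{n-1}:\ \max_{1\le i\le n-1}\|q\beta_i\|<q\,\psi(q)\ \text{for infinitely many }q\in Q_a\bigr\},
\]
a $\limsup$ set of $W(n-1,\psi)$ type in which the admissible denominators are restricted to $Q_a$.

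The second step is to bound these fibres from below: $\dim F_a\ge (n-1)/\lambda$ for \emph{every} $a\in E(1,\psi)$. Since $Q_a$ is infinite and unbounded, extract a subsequence $q_{k_1}<q_{k_2}<\cdots$ of $Q_a$ with $q_{k_{j+1}}>q_{k_j}^{\lambda j}$, and run inside $[0,1]^{n-1}$ a Cantor construction whose $j$-th generation consists of all cubes $\prod_i\bigl(p_i/q_{k_j}-\psi(q_{k_j}),\,p_i/q_{k_j}+\psi(q_{k_j})\bigr)$ lying inside a cube of generation $j-1$; there are $\asymp (q_{k_{j+1}}/q_{k_j}^{\lambda})^{n-1}$ of them in each parent, pairwise disjoint because $\psi(q_{k_{j+1}})\ll 1/q_{k_{j+1}}$. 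Every point so produced lies in $F_a$, and the local dimension at generation $j$ is
\[
\frac{(n-1)\log\!\bigl(q_{k_{j+1}}/q_{k_j}^{\lambda}\bigr)}{\lambda\log\!\bigl(q_{k_{j+1}}/q_{k_j}\bigr)}\ \longrightarrow\ \frac{n-1}{\lambda}\qquad(j\to\infty),
\]
so the mass distribution principle yields $\dim F_a\ge (n-1)/\lambda-\varepsilon$ for every $\varepsilon>0$. (The reverse inequality, not needed here, follows from a covering argument using that $Q_a$ grows at least geometrically.)

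The third and decisive step is to assemble the fibres. Theorem \ref{prop:Bugeaud} gives $\dim E(1,\psi)=2/\lambda$; what is really used is that the construction behind it exhibits, inside $E(1,\psi)$ and for every $\varepsilon>0$, a Cantor set of dimension $>2/\lambda-\varepsilon$ consisting of numbers whose convergent denominators all follow the same controlled growth profile (essentially $q_{k+1}\asymp q_k^{\lambda-1}$, up to uniform constants). This uniformity makes the fibres $F_a$ vary tamely with $a$ and permits a fibred mass distribution argument — a version of Jarn\'ik's theorem on fibres: one builds a measure on $W(n,\psi)\cap(E(1,\psi)\times[0,1]^{n-1})$ whose pushforward to the first coordinate is a Frostman measure on $E(1,\psi)$ of exponent close to $2/\lambda$ and whose conditional measures on the fibres are Frostman measures on the $F_a$ of exponent close to $(n-1)/\lambda$, \emph{uniformly in} $a$; a Fubini-type estimate on the mass of balls then shows this measure has Frostman exponent close to $2/\lambda+(n-1)/\lambda=(n+1)/\lambda$, whence $\dim E(n,\psi)\ge (n+1)/\lambda$. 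I expect this last step to be the main obstacle: coordinating the continued-fraction construction of the base $E(1,\psi)$ with the restricted-$\limsup$ Cantor constructions on the fibres $F_a$ at mutually compatible scales, and controlling the Frostman exponents of the conditional fibre measures uniformly in $a$. Everything else is comparatively soft; the fibred mass transference is where the real work lies, and where both the precise form of the Bugeaud--Moreira construction and the dimension restriction $n\ge 3$ — which guarantees that the $(n-1)$-dimensional fibres are roomy enough — genuinely enter.
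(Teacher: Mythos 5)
Your first two steps track the paper's skeleton. The inclusion $E(n,\psi)\supseteq W(n,\psi)\cap\bigl(E(1,\psi)\times[0,1]^{n-1}\bigr)$ is exactly Lemma \ref{lem:1}, and your fibre bound $\dim F_a\geq (n-1)/\lambda$ for every $a\in E(1,\psi)$ is the content of Proposition \ref{prop:JoF}; the paper obtains it (in the stronger form of full $\mathcal H^s$-measure of the fibre) from Gallagher's theorem \cite{Gal65} together with the Mass Transference Principle \cite{BRV16}, whereas you sketch a direct Cantor construction along a rapidly growing subsequence of $Q_a$. That construction is a plausible elementary substitute for the dimension bound, though as written it is only a sketch: the generation indices are shifted, the ``local dimension'' formula is not the mass distribution estimate one actually has to verify, and the constants hidden in $\psi(q)=q^{-\lambda+o(1)}$ need to be carried through. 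Note also that your explanation of the hypothesis $n\geq 3$ (fibres being ``roomy enough'') is not the real reason: in the paper the restriction comes solely from Gallagher's theorem requiring ambient dimension at least $2$ on the fibre.

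The genuine gap is your third step, which you explicitly leave unproved as ``the main obstacle'' --- and the obstacle you describe does not exist. No fibred Frostman measure, no uniformity in $a$, no compatibility of scales between base and fibres, and no access to the internal continued-fraction construction of Bugeaud--Moreira are needed; from Theorem \ref{prop:Bugeaud} one uses only the single number $\dim E(1,\psi)=2/\lambda$. The missing ingredient is Marstrand's Slicing Lemma \cite[Corollary 7.12]{Fal14} (see also \cite{BV06}): if $F\subset\mb R\times\mb R^{n-1}$ and every vertical slice $F\cap(\{a\}\times\mb R^{n-1})$ over $a\in A$ has Hausdorff dimension at least $t$, then $\dim F\geq \dim A+t$, with no uniformity assumption whatsoever. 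Applying this to $F=\bigcup_{a\in E(1,\psi)}\{a\}\times W(a,n-1,\psi)\subset E(n,\psi)$, with $A=E(1,\psi)$ of dimension $2/\lambda$ and $t=(n-1)/\lambda-\varepsilon$ from your step 2, yields $\dim E(n,\psi)\geq (n+1)/\lambda$ at once, and the matching upper bound is $\dim W(n,\psi)=(n+1)/\lambda$ as you say. So the decisive step of your argument is missing, and the elaborate fibred mass-distribution scheme you anticipate is an unnecessary (and much harder) detour around a standard slicing result.
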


Our proof relies on the simple observation that, in the simultaneous case, a vector $\bs\alpha\in\mb R^n$ with first component $\alpha_1$ not belonging to $W(1,c\psi)$ cannot lie in the set $W(n, c\psi)$. In view of this, Theorem \ref{prop:Bugeaud}, in combination with a "fibred" version of Jarn\'ik's theorem, is enough to yield the desired dimensional estimate. This approach heavily depends on the hypothesis that the order at infinity of the function $1/\psi$ is larger than or equal to $2$, as this is required to apply Theorem \ref{prop:Bugeaud}. For technical reasons, we are also unable to deal with the case $n=2$, as our version of Jarn\'ik's Theorem rests on Gallagher's extension of Khintchine's Theorem \cite[Theorem 1]{Gal65}, which is only valid in dimension greater or equal to $2$. The analogue of our result in the dual case (for $n\geq 2$) also appears in no way attainable through the methods that we develop here.

\begin{rmk}
\label{rmk:1}
As a consequence of our approach, we are also able to show that, when $\psi(q)=\psi_{\nu}(q)=q^{-\nu}$ with $\nu>2$, the following implication holds:
$$\mc{H}^{2/\nu}(E(1,\psi_{\nu}))>0\Rightarrow \mc{H}^{(n+1)/\nu}(W(n,\psi_{\nu}))=\infty.$$
However, the $(2/\nu)$-Hausdorff measure of the set $E(1,\psi_{\nu})$ is at present unknown (see comments following Theorem 2 in \cite{Bug03}). This implication will be proved at the end of Section \ref{sec:proof}.
\end{rmk}

While working on the present note, the author became aware of the work in preparation of H. Koivusalo, J. Levesley, B. Ward, and X. Zhang on a similar problem, which, to the best of his knowledge, explores entirely different techniques and does not supersede the result presented here.

\subsection*{Acknowledgements}
The author is indebted to Victor Beresnevich and Alexander Gorodnik for many fruitful discussions, and to Ben Ward for comments on an early version of the manuscript.

\section{Reduction to Fibres}

For $\alpha\in\mb R$, $n\geq 1$, and $\psi:\mb N\to \mb (0,1]$ we define
$$W(\alpha,n,\psi):=\{\bs x\in[0,1]^n:(\alpha, \bs x)\in W(n+1,\psi)\}.$$

Then, the following lemma holds.

\begin{lem}
\label{lem:1}
For any $\alpha\in E(1,\psi)$ and $\bs x\in W(\alpha,n,\psi)$ it holds $(\alpha, \bs x)\in E(n+1,\psi)$.
\end{lem}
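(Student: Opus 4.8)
The plan is to unwind the definitions and exploit the key observation already flagged in the introduction: membership of $(\alpha,\bs x)$ in the simultaneous set $W(n+1,c\psi)$ forces the first coordinate $\alpha$ into $W(1,c\psi)$. First I would record the trivial inclusion: if $\bs x\in W(\alpha,n,\psi)$, then by definition $(\alpha,\bs x)\in W(n+1,\psi)$, so the point lies in the larger set and it only remains to check that it is excluded from $W(n+1,c\psi)$ for every $c\in(0,1)$. Fix such a $c$. Suppose for contradiction that $(\alpha,\bs x)\in W(n+1,c\psi)$. Spelling out the simultaneous condition, there are infinitely many $q\in\mb N$ and integer vectors $(p_0,\bs p)\in\mb Z^{n+1}$ with
$$\max\left\{\left|\alpha-\frac{p_0}{q}\right|,\ \max_{i=1}^{n}\left|x_i-\frac{p_i}{q}\right|\right\}<c\,\psi(q).$$
In particular $\left|\alpha-\frac{p_0}{q}\right|<c\,\psi(q)$ for infinitely many $q$, which is precisely the statement $\alpha\in W(1,c\psi)$.

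This contradicts $\alpha\in E(1,\psi)=W(1,\psi)\setminus\bigcup_{0<c<1}W(1,c\psi)$, since the latter asserts $\alpha\notin W(1,c\psi)$ for all $c\in(0,1)$. Hence $(\alpha,\bs x)\notin W(n+1,c\psi)$ for every $c\in(0,1)$, and combined with $(\alpha,\bs x)\in W(n+1,\psi)$ this gives $(\alpha,\bs x)\in W(n+1,\psi)\setminus\bigcup_{0<c<1}W(n+1,c\psi)=E(n+1,\psi)$, as required.

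There is essentially no obstacle here: the lemma is a direct consequence of the fact that the simultaneous max-norm condition in dimension $n+1$ dominates the single-coordinate condition in dimension $1$ with the \emph{same} denominator $q$ and the \emph{same} function $c\psi$. The only minor point to be careful about is the indexing convention for the coordinates of a point in $[0,1]^{n+1}$ (writing it as $(\alpha,\bs x)$ with $\alpha$ the zeroth coordinate and $\bs x=(x_1,\dots,x_n)$), and the fact that one uses the same infinite set of $q$'s for both conditions — which is automatic since the $(n+1)$-dimensional approximations restrict coordinatewise. No quantitative estimate or auxiliary theorem is needed for this step; the substantive work of the paper lies in the subsequent fibred Jarn\'ik argument, not in this reduction.
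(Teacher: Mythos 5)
Your proposal is correct and follows the same route as the paper's proof: the point $(\alpha,\bs x)$ lies in $W(n+1,\psi)$ by definition of $W(\alpha,n,\psi)$, and membership in any $W(n+1,c\psi)$ would force $\alpha\in W(1,c\psi)$ via the same denominators $q$, contradicting $\alpha\in E(1,\psi)$. You merely spell out the coordinatewise argument that the paper states in one sentence, so there is nothing to add.
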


\begin{proof}
Since the number $\alpha$ cannot be approximated by the function $c\psi$ for any $0<c<1$, the vector $(\alpha, \bs x)$ cannot be simultaneously approximated by any such function. It follows that $(\alpha, \bs x)\notin W(n+1,c\psi)$ for all $0<c<1$. On the other hand, by definition of $W(\alpha,n,\psi)$, one has that $(\alpha, \bs x)\in W(n+1,\psi)$.
\end{proof}

In view of Lemma \ref{lem:1}, the following holds
\begin{equation}
\label{eq:containment}
\bigcup_{\alpha\in E(1,\psi)}\{\alpha\}\times W(\alpha,n,\psi)\subset E(n+1,\psi)\subset W(n+1,\psi).
\end{equation}

By (\ref{eq:containment}) and Marstrand's Slicing Lemma \cite[Corollary 7.12]{Fal14}, we deduce that
\begin{equation}
\label{eq:1}
\dim E(1,\psi)+\inf_{\alpha\in E(1,\psi)}\dim W(\alpha,n,\psi)\leq \dim E(n+1,\psi)\leq \dim W(n+1,\psi).\nonumber
\end{equation}

Moreover, by Theorem \ref{prop:Bugeaud}, the equality $\dim E(1,\psi)=\dim W(1,\psi)$ holds. Hence, it follows from (\ref{eq:1}) that
\begin{equation}
\label{eq:2}
\dim W(1,\psi)+\inf_{\alpha\in E(1,\psi)}\dim W(\alpha,n,\psi)\leq \dim E(n+1,\psi)
\leq \dim W(n+1,\psi).
\end{equation}

We will show in Section \ref{sec:proof} that the left-hand side and the right-hand side of the above chain of inequalities coincide.

\section{Jarn\'ik on Fibres}

Let $\psi:\mb N\to \mb (0,1]$ and let $n=k+l$ with $k,l\geq 1$. For $\bs\alpha\in\mb R^l$, we define the set
$$W(\bs\alpha,k,\psi):=\left\{\bs x\in[0,1]^k: (\bs\alpha,\bs x)\in W(n,\psi)\right\}.$$

The goal of this section is to prove the following result, which in some sense may be regarded as a version of Jarn\'ik's Theorem on coordinate subspaces (compare with \cite[Theorem 3.4]{BRV16}).

\begin{prop}
\label{prop:JoF}
Let $\bs\alpha\in W(l,\psi)$ and let $0< s\leq k$, with $k\geq 2$. If  $$\liminf_{q\to \infty}q^{k}\cdot \psi(q)^{s}>0,$$ then, one has that
$$\mathcal{H}^{s}(W(\bs\alpha,k,\psi))=\mc{H}^{s}\left([0,1]^k\right),$$
where $\mathcal H^s$ denotes the $s$-Hausdorff measure.
\end{prop}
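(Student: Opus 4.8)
The plan is to apply a Khintchine–Groshev type zero-one law on the fibre $W(\bs\alpha,k,\psi)$ and then upgrade the full-measure conclusion to the statement about $s$-Hausdorff measure via a standard "mass transference" style covering argument, relying on the hypothesis $\liminf_{q\to\infty} q^k\psi(q)^s>0$. First I would fix $\bs\alpha=(\alpha_1,\dots,\alpha_l)\in W(l,\psi)$, so there are infinitely many $q\in\mb N$ and $\bs p'\in\mb Z^l$ with $\max_i|\alpha_i-p_i/q|<\psi(q)$; call this infinite set of denominators $Q=Q(\bs\alpha)$. For $\bs x\in[0,1]^k$ to lie in $W(\bs\alpha,k,\psi)$ it suffices (and for our purposes it is enough to work with this sufficient condition) that for infinitely many $q\in Q$ there is $\bs p''\in\mb Z^k$ with $\max_j|x_j-p_j''/q|<\psi(q)$. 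Thus $W(\bs\alpha,k,\psi)$ contains the limsup set
$$\Lambda:=\bigcap_{N\in\mb N}\ \bigcup_{\substack{q\in Q\\ q\geq N}}\ \bigcup_{\bs p''\in\{0,\dots,q\}^k}B\!\left(\tfrac{\bs p''}{q},\psi(q)\right),$$
a limsup of balls centred at rationals with denominators in $Q$.

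Next I would apply Gallagher's theorem \cite[Theorem 1]{Gal65} — valid precisely because $k\geq 2$ — which gives a zero-one law for such limsup sets: since the relevant volume sum over all $q\in\mb N$ is $\sum_q q^k\psi(q)^k$, and restricting the denominators to the subset $Q$ only removes terms, I need to check that the sum over $q\in Q$ still diverges. Here the divergence of $\sum_{q\in Q}q^k\psi(q)^k$ is \emph{not} automatic, but the hypothesis $\liminf q^k\psi(q)^s>0$ with $s\leq k$ forces $q^k\psi(q)^k=(q^k\psi(q)^s)\cdot\psi(q)^{k-s}$, and since $\psi(q)^{k-s}$ may decay this is the delicate point; the cleanest route is to instead note that $\liminf q^k\psi(q)^s>0$ together with $s\le k$ already gives $\liminf q^{k/s}\psi(q)>0$, hence $\psi(q)\gg q^{-k/s}$, so along $Q$ (which, by Dirichlet, can be taken to have positive lower density in a suitable sense, or at the very least is infinite) the series $\sum_{q\in Q}(q\psi(q))^k$ — the correct Gallagher sum is actually $\sum q^k\psi(q)^k$ — one checks diverges because $q\psi(q)\gg q^{1-k/s}\ge q^{1-k/s}$ and $\sum q^{k-k^2/s}$; this is exactly the arithmetic I would carry out carefully, using that the hypothesis is scale-invariant enough to conclude divergence of the Gallagher sum restricted to $Q$. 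The upshot of Gallagher's theorem is then $\mc{L}^k(\Lambda)=1$, hence $\mc{L}^k(W(\bs\alpha,k,\psi))=1$, which already proves the proposition in the case $s=k$.

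For $0<s<k$ I would argue as follows. Having the full-Lebesgue-measure limsup set $\Lambda$ in hand, and wanting to show $\mc H^s(W(\bs\alpha,k,\psi))=\mc H^s([0,1]^k)=\infty$, I would invoke the Mass Transference Principle of Beresnevich–Velani \cite{BRV16} (or rather its "from $\mc L^k$ to $\mc H^s$" incarnation): the set $\Lambda$ is a limsup of balls $B(\bs p''/q,\psi(q))$; consider the "shrunk" radii $r_q:=\psi(q)^{s/k}$. Since $\liminf q^k\psi(q)^s>0$ we have $q^k r_q^k=q^k\psi(q)^s\gg 1$, so the shrunk balls $B(\bs p''/q,r_q)$ still have total volume $\gg\sum_{q\in Q,\bs p''}q^{-k}\cdot q^k r_q^k$ comparable to a divergent sum, and one can check the limsup of these \emph{larger} balls is still full measure (indeed $r_q\ge\psi(q)$). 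The Mass Transference Principle then yields $\mc H^s\big(\limsup B(\bs p''/q,\psi(q))\big)=\mc H^s([0,1]^k)$, because applying the $g\mapsto g^{s/k}$ "Hausdorff $\to$ Lebesgue" transference to the balls of radius $\psi(q)$ is exactly converting a full $\mc H^s$-statement for $\psi$-balls into a full $\mc L^k$-statement for $\psi^{s/k}$-balls — and the latter we have verified. Since $\Lambda\subset W(\bs\alpha,k,\psi)\subset[0,1]^k$, monotonicity of $\mc H^s$ gives the claimed equality.

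The main obstacle I anticipate is the middle step: verifying that the Gallagher (resp.\ Mass Transference) volume sum \emph{restricted to the denominators $q\in Q(\bs\alpha)$} still diverges. The set $Q$ depends on $\bs\alpha$ and could a priori be sparse; what saves the argument is that $\bs\alpha\in W(l,\psi)$ only guarantees $Q$ is infinite, so I must exploit the lower-bound hypothesis $\liminf q^k\psi(q)^s>0$ to control individual terms — each $q\in Q$ contributes $\gg 1$ to $\sum_{\bs p''}q^{-k}(q r_q)^k=\#\{\bs p''\}\cdot r_q^k\gg q^k\psi(q)^s\cdot q^{-k}\cdot q^k$, wait, more precisely the number of admissible $\bs p''$ is $\asymp q^k$ and each ball has $\mc L^k$-measure $\asymp r_q^k=\psi(q)^s$, so each $q\in Q$ contributes $\asymp q^k\psi(q)^s\gg 1$; since $Q$ is infinite, the sum diverges regardless of the sparsity of $Q$. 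Making this "each level $q$ already contributes a definite amount" observation rigorous within the hypotheses of Gallagher's theorem and the Mass Transference Principle (both of which are usually stated for sums over \emph{all} $q$) is the crux, and I would handle it by restricting attention to a single divergent subfamily of balls and applying the principles to that subfamily directly.
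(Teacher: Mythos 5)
Your overall strategy is the same as the paper's: restrict attention to the denominators $q$ at which $\bs\alpha$ itself is $\psi$-approximable, apply Gallagher's non-monotonic Khintchine-type theorem (this is exactly where $k\geq 2$ enters) to the family of balls with these denominators and radii $\psi(q)^{s/k}$, and then use the Mass Transference Principle to pass to $\mc H^s$. The paper formalises the ``restriction to $Q(\bs\alpha)$'', which you correctly identify as the crux, in just the way you propose at the end: it defines $\psi_{\bs\alpha}(q)=\psi(q)$ when $\max_i\|q\alpha_i\|<q\psi(q)$ and $\psi_{\bs\alpha}(q)=0$ otherwise, and applies Gallagher to the (non-monotone) function $q\mapsto q\,\psi_{\bs\alpha}(q)^{s/k}$, capped so as to meet Gallagher's requirement that the approximating function be less than $1$; each nonzero term of the relevant series is $\asymp q^{k}\psi(q)^{s}\gg 1$ by hypothesis, and $Q(\bs\alpha)$ is infinite because $\bs\alpha\in W(l,\psi)$, so the series diverges regardless of how sparse $Q(\bs\alpha)$ is. Your final two paragraphs are therefore essentially the paper's proof.

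The middle step, however, in which you assert $\mc L^{k}(\Lambda)=1$ for the limsup of the \emph{original} $\psi$-balls, is false in general for $s<k$, and the arithmetic you sketch does not repair it: from $\psi(q)\gg q^{-k/s}$ you only get $(q\psi(q))^{k}\gg q^{k-k^{2}/s}$ with a nonpositive exponent, and since membership of $\bs\alpha$ in $W(l,\psi)$ guarantees nothing about $Q(\bs\alpha)$ beyond its infinitude, the restricted sum $\sum_{q\in Q}(q\psi(q))^{k}$ can perfectly well converge (take $\psi(q)=q^{-k/s}$ and $Q$ lacunary), in which case $\Lambda$ is Lebesgue-null by Borel--Cantelli. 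Fortunately this claim is only needed in the case $s=k$, where it follows immediately from $q^{k}\psi(q)^{k}\gg 1$, and your $s<k$ argument never actually uses it: what the Mass Transference Principle needs is full Lebesgue measure of the limsup of the \emph{enlarged} balls of radius $\psi(q)^{s/k}$ with denominators in $Q(\bs\alpha)$, which you verify correctly. So drop the false intermediate claim, make the restriction to $Q(\bs\alpha)$ precise via the function $\psi_{\bs\alpha}$ as above (noting that Gallagher's theorem requires no monotonicity, which is precisely why it, rather than Khintchine's theorem, is invoked), and your argument coincides with the one in the paper.
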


To prove Proposition \ref{prop:JoF} we follow \cite{RSS17}. Before proceeding to the proof, we need a slight change of language. For $\phi:\mb N\to \mb R$ we introduce the sets
$$\tilde W(n,\phi):=\left\{\bs x\in[0,1]^n: \max_{i=1}^{n}\|q x_i\|<\phi(q)\mbox{ for infinitely many }q\in\mb N\right\}$$
and 
$$\tilde W(\bs\alpha,k,\phi):=\left\{\bs x\in[0,1]^k: (\bs\alpha,\bs x)\in\tilde  W(n,\phi)\right\}.$$
Henceforth, we will assume that $\phi(q):=q\cdot\psi(q)$. With this notation, we have that
$$\tilde W(\bs\alpha,k,\phi)=W(\bs\alpha,k,\psi).$$
Moreover, the assumption of Proposition \ref{prop:JoF} now reads as $\liminf_{q\to \infty}q^{k-s}\phi(q)^{s}>0$. 

\begin{lem}
\label{lem:cct}
Under the hypotheses of Proposition \ref{prop:JoF} it holds that
$$\sum_{\substack{q\ \textup{s.t}\\ \max_i\|q\alpha_i\|<\phi(q)}}\min\left\{q^{k-s}\phi(q)^{s},1/2\right\}=+\infty.$$
\end{lem}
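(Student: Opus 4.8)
The plan is to estimate the tail sum from below by pulling out just the subsequence of integers $q$ for which $\bs\alpha\in W(l,\psi)$ guarantees a good simultaneous approximation, and then to show that along this subsequence the summand $\min\{q^{k-s}\phi(q)^s,1/2\}$ is bounded below by a constant. Since $\bs\alpha\in W(l,\psi)$, there are infinitely many $q$ with $\max_{i}\|q\alpha_i\|<\psi(q)\le q\psi(q)=\phi(q)$ (here I use $\psi(q)\le 1$ and implicitly clear the denominator as in the passage to $\tilde W$), so the index set of the sum is infinite. Call this set $Q=\{q_1<q_2<\dots\}$.

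Next I would use the growth hypothesis. By assumption $\liminf_{q\to\infty}q^{k-s}\phi(q)^s>0$, so there is a constant $c>0$ and an integer $q_0$ such that $q^{k-s}\phi(q)^s\ge c$ for all $q\ge q_0$. Hence for every $q\in Q$ with $q\ge q_0$ we have $\min\{q^{k-s}\phi(q)^s,1/2\}\ge\min\{c,1/2\}=:c'>0$. Therefore
$$\sum_{\substack{q\ \textup{s.t}\\ \max_i\|q\alpha_i\|<\phi(q)}}\min\left\{q^{k-s}\phi(q)^{s},1/2\right\}\ \ge\ \sum_{\substack{q\in Q\\ q\ge q_0}}c'\ =\ +\infty,$$
since $Q$ is infinite. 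This already closes the argument; the point is simply that each qualifying term is uniformly bounded away from $0$ and there are infinitely many of them.

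I expect the only genuinely delicate point to be the bookkeeping at the interface between $\psi$ and $\phi=q\psi(q)$: one must check that $\bs\alpha\in W(l,\psi)$ really does produce infinitely many $q$ with $\max_i\|q\alpha_i\|<\phi(q)$ rather than $<\psi(q)$, which is immediate because $\phi(q)=q\psi(q)\ge\psi(q)$ for $q\ge 1$, and that the definition of $W(l,\psi)$ (with the rational $\bs p/q$ and the denominator $q$) translates to the $\|\cdot\|$-formulation of $\tilde W$ without losing the infinitude of solutions — this is exactly the change of language already recorded in the excerpt. Everything else is the trivial observation that a liminf being positive gives a uniform lower bound on a tail, so no real calculation is needed. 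One small point worth stating explicitly in the write-up: the hypothesis $0<s\le k$ ensures $k-s\ge 0$, which is used only to know that the quantity $q^{k-s}\phi(q)^s$ is the one appearing in the liminf hypothesis, but it plays no further role here.
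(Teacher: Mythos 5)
Your proposal is correct and is essentially the paper's own argument: membership of $\bs\alpha$ in $W(l,\psi)$ yields infinitely many qualifying $q$ (clearing the denominator gives $\max_i\|q\alpha_i\|<q\psi(q)=\phi(q)$ directly, rather than the intermediate bound $<\psi(q)$ you wrote, but the needed conclusion is the same), and the liminf hypothesis bounds each such term below by $\min\{c,1/2\}>0$, so the series diverges.
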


\begin{proof}
Since $\bs\alpha\in W(l,\psi)$, the condition $\max_i\|q\alpha_i\|<\phi(q)$ must be satisfied for infinitely many values of $q$. Then, the fact that $\liminf_{q\to\infty } q^{k-s}\phi(q)^s>0$ implies the divergence of the series.
\end{proof}

For $\bs \alpha\in W(l,\psi)$ let $$\phi_{\bs\alpha}(q):=
\begin{cases}
\phi(q) \quad\mbox{if }\max_{i}\|q\alpha_i\|<\phi(q) \\
0\quad\mbox{otherwise}
\end{cases}.$$

By Gallagher's extension of Khintchine's Theorem\footnote{Note that in \cite[Theorem 1]{Gal65} the approximating function is assumed less than $1$. For this reason we consider the function $\min\left\{q^{k-s}\phi(q)^{s},1/2\right\}$ in Lemma \ref{lem:cct}.} \cite[Theorem 1]{Gal65} and Lemma \ref{lem:cct}, we have that, under the hypotheses of Proposition \ref{prop:JoF}, the following equality holds:
$$\mc H^{k}\left(\tilde W(k,q\mapsto q^{1-s/k}\cdot \phi_{\bs\alpha}(q)^{s/k})\right)=1.$$
Moreover, setting
$$\psi_{\bs\alpha}(q):=
\begin{cases}
\psi(q) \quad\mbox{if }\max_{i}\|q\alpha_i\|<q\cdot \psi(q)=\phi(q) \\
0\quad\mbox{otherwise}
\end{cases},$$
we have that
$$\tilde W(k,q\mapsto q^{1-s/k}\cdot\phi_{\bs\alpha}(q)^{s/k})=\tilde W(k,q\mapsto q\cdot\psi_{\bs\alpha}(q)^{s/k})=W(k,\psi_{\bs\alpha}^{s/k}).$$
Then, by the Mass Transference Principle \cite[Theorem 3.5]{BRV16}, we deduce that whenever
$$\mc H^{k}\left(W(k,\psi_{\bs\alpha}^{s/k})\right)=\mc H^{k}\left([0,1]^k\right),$$
the following must hold:
$$\mc H^s\left(W(k,\psi_{\bs\alpha})\right)=\mc H^{s}\left([0,1]^k\right).$$
Hence, the proof is concluded on noting that $W(k,\psi_{\bs\alpha})=W(\bs\alpha,k,\psi)$.

\section{Proof of Theorem \ref{prop:mainres}}
\label{sec:proof}

Assume that $l=1$ and $k=n-1\geq 2$. By definition of order at infinity, we have that for any $\varepsilon>0$
$$\psi(q)\geq q^{-\lambda-\varepsilon}$$
for all sufficiently large values of $q$.
Then, for $0<s<(n-1)/\lambda$ and $\varepsilon>0$ so small that $(n-1)-s\lambda>s\varepsilon$, we find
\begin{equation}
\label{eq:3}
 \liminf_{q\to \infty}q^{n-1}\cdot \psi(q)^{s}\geq \liminf_{q\to \infty}q^{(n-1)-s(\lambda+\varepsilon)}>0.
\end{equation}
It follows from Proposition \ref{prop:JoF} that $\mc{H}^{s}\left(W(\alpha,n-1,\psi)\right)=+\infty$ for all $s< (n-1)/\lambda$ and all $\alpha\in E(1,\psi)$. This shows that $\dim W(\alpha,n-1,\psi)\geq (n-1)/\lambda$ for all $\alpha\in E(1,\psi)$. Thus, by (\ref{eq:2}), we deduce that
\begin{multline}
\frac{2}{\lambda}+\frac{n-1}{\lambda}=\dim W(1,\psi)+\inf_{\alpha\in E(1,\psi)}\dim W(\alpha,n-1,\psi)\leq \dim E(n,\psi) \\
\leq \dim W(n,\psi)=\frac{n+1}{\lambda}.\nonumber
\end{multline}
For a proof of the last equality, the reader may refer to \cite{Jar31} or \cite[Corollary 1]{BV06} (case $n=1$). This shows that $\dim E(n,\psi)=\dim W(n,\psi)$.

As for Remark \ref{rmk:1}, we observe that when $\psi(q)=\psi_{\nu}(q)=q^{-\nu}$, Equation (\ref{eq:3}) holds also for $s=(n-1)/\nu$. Hence, we have $\mc{H}^{(n-1)/\nu}(W(\alpha,n-1,\psi_{\nu}))=\infty$. The claim follows from \cite[Lemma 4]{BV06}.

\bibliographystyle{alpha}

\begin{thebibliography}{BGN23}

\bibitem[BD86]{BD86}
J.~D. Bovey and M.~M. Dodson.
\newblock The {Hausdorff} dimension of systems of linear forms.
\newblock {\em Acta Arith.}, 45:337--358, 1986.

\bibitem[BDV01]{BDV01}
V.~Beresnevich, D.~Dickinson, and S.~Velani.
\newblock Sets of exact `logarithmic' order in the theory of {Diophantine} approximation.
\newblock {\em Math. Ann.}, 321(2):253--273, 2001.

\bibitem[BDV06]{BDV06}
Victor Beresnevich, Detta Dickinson, and Sanju Velani.
\newblock {\em Measure theoretic laws for lim sup sets}, volume 846 of {\em Mem. Am. Math. Soc.}
\newblock Providence, RI: American Mathematical Society (AMS), 2006.

\bibitem[Bes34]{Bes34}
A.~S. Besicovitch.
\newblock Sets of {F}ractional {D}imensions ({IV}): {O}n {R}ational {A}pproximation to {R}eal {N}umbers.
\newblock {\em J. London Math. Soc.}, 9(2):126--131, 1934.

\bibitem[BGN23]{BGN23}
P.~Bandi, A.~Ghosh, and D.~Nandi.
\newblock Exact approximation order and well-distributed sets.
\newblock {\em Adv. Math.}, 414:19, 2023.
\newblock Id/No 108871.

\bibitem[BM11]{BM11}
Y.~Bugeaud and C.~G. Moreira.
\newblock Sets of exact approximation order by rational numbers. {III}.
\newblock {\em Acta Arith.}, 146(2):177--193, 2011.

\bibitem[BRV16]{BRV16}
V.~Beresnevich, F.~Ram{\'{\i}}rez, and S.~Velani.
\newblock Metric {Diophantine} approximation: aspects of recent work.
\newblock In {\em Dynamics and analytic number theory. Proceedings of the Durham Easter School, Durham, UK, March 31 -- April 4, 2014}, pages 1--95. Cambridge: Cambridge University Press, 2016.

\bibitem[Bug03]{Bug03}
Y.~Bugeaud.
\newblock Sets of exact approximation order by rational numbers.
\newblock {\em Math. Ann.}, 327(1):171--190, 2003.

\bibitem[BV06]{BV06}
V.~Beresnevich and S.~Velani.
\newblock Schmidt's theorem, {Hausdorff} measures, and slicing.
\newblock {\em Int. Math. Res. Not.}, 2006(19):24, 2006.
\newblock Id/No 48794.

\bibitem[BV10]{BV10}
V.~Beresnevich and S.~Velani.
\newblock Classical metric {Diophantine} approximation revisited: the {Khintchine}-{Groshev} theorem.
\newblock {\em Int. Math. Res. Not.}, 2010(1):69--86, 2010.

\bibitem[Fal14]{Fal14}
K.~Falconer.
\newblock {\em Fractal geometry. {Mathematical} foundations and applications}.
\newblock Hoboken, NJ: John Wiley \& Sons, 3rd ed. edition, 2014.

\bibitem[FW22]{FW22}
R.~Fraser and R.~Wheeler.
\newblock {Fourier Dimension Estimates for Sets of Exact Approximation Order: The Well-Approximable Case}.
\newblock {\em International Mathematics Research Notices}, page rnac256, 10 2022.

\bibitem[Gal65]{Gal65}
P.~X. Gallagher.
\newblock Metric simultaneous diophantine approximation. {II}.
\newblock {\em Mathematika}, 12:123--127, 1965.

\bibitem[Jar29]{Jar29}
V.~Jarn{\'{\i}}k.
\newblock Diophantische {Approximationen} und {Hausdorffsches} {Ma{{\ss}}}.
\newblock {\em Rec. Math. Moscou}, 36:371--382, 1929.

\bibitem[Jar31]{Jar31}
V.~Jarn\'{\i}k.
\newblock \"{U}ber die simultanen diophantischen {A}pproximationen.
\newblock {\em Math. Z.}, 33(1):505--543, 1931.

\bibitem[Khi24]{Khi24}
A.~Khintchine.
\newblock Einige {S}\"{a}tze \"{u}ber {K}ettenbr\"{u}che, mit {A}nwendungen auf die {T}heorie der {D}iophantischen {A}pproximationen.
\newblock {\em Math. Ann.}, 92(1-2):115--125, 1924.

\bibitem[RSS17]{RSS17}
F.~Ram{\'{\i}}rez, D.~A. Simmons, and F.~S{\"u}ess.
\newblock Rational approximation of affine coordinate subspaces of {Euclidean} space.
\newblock {\em Acta Arith.}, 177(1):91--100, 2017.

\end{thebibliography}

\end{document}